\newtheorem{theorem}{Theorem}
\newtheorem{lemma}[theorem]{Lemma}
\newtheorem{proposition}{Proposition}
\theoremstyle{definition}
\newtheorem{remark}{Remark}
\newcommand{\bbar}{\begin{pmatrix}}
\newcommand{\ebar}{\end{pmatrix}}
\newcommand{\real}{{\mathbb R}}
\newcommand{\hh}{\mathcal{H}} 
\newcommand{\kk}{\mathcal{K}}
\newcommand{\D}{{\mathbb D}}
\newcommand{\C}{{\mathbb C}}
\newcommand{\rhohat}{\hat \rho}
\newcommand{\bdm}{\begin{displaymath}}
\newcommand{\edm}{\end{displaymath}}
\newcommand{\beq}{\begin{equation}}
\newcommand{\beqa}{\begin{eqnarray}}
\newcommand{\beqas}{\begin{eqnarray*}}
\newcommand{\eeq}{\end{equation}}
\newcommand{\eeqa}{\end{eqnarray}}
\newcommand{\eeqas}{\end{eqnarray*}}
\newcommand{\dd}{\textup{d}}
\begin{document}

\title[Loop Group Decompositions in Almost Split Real Forms]
{Loop Group Decompositions in Almost Split Real Forms and Applications to Soliton Theory and Geometry}   
 

\author{David Brander}
\address{Department of Mathematics\\ Faculty of Science\\Kobe University\\1-1, Rokkodai, Nada-ku, Kobe 657-8501\\ Japan}
\email{brander@math.kobe-u.ac.jp}
\thanks{Research supported by Japan Society for the Promotion of Science.}

\begin{abstract}
We prove a global Birkhoff decomposition  for almost split real forms 
of loop groups, when an underlying finite dimensional Lie group is compact. 
Among applications, this
shows that the dressing action - by the whole subgroup of loops which extend holomorphically
to the exterior disc -  on the $U$-hierarchy of the ZS-AKNS systems, on curved flats and on various
 other integrable
systems, is global for compact cases. It also implies a global infinite dimensional
Weierstrass-type representation for Lorentzian harmonic maps ($1+1$ wave maps) 
from surfaces into compact symmetric spaces.  An  ``Iwasawa-type" decomposition
of the same type of real form, with respect to a fixed point subgroup of 
an  involution of
the second kind, is also proved, and an application given.

\end{abstract}

\subjclass[2000]{22E67, 37K10, 37K25 (Primary) 53C40 (Secondary)}

\maketitle

\section{Introduction}
Let $G$ be a compact connected semisimple Lie group, given as the fixed point subgroup
$G^\C_\rho$, where $\rho$ is a complex antilinear involution and $G^\C$ is
a complexification of $G$. We assume $G$ is embedded as a subgroup of some matrix group.
 Let $\Lambda G^\C$ denote the complex Banach Lie group of loops
in $G^\C$, with some $H^s$-topology, $s> 1/2$. 
In the study of loop groups by Pressley and Segal \cite{pressleysegal}, 
among the many interesting facts proved are 
 two loop group decompositions which have
been very useful in applications to integrable systems. 
The \emph{Birkhoff decomposition} states 
that the set
\beq \label{birkhoff}
\mathcal{B} := \Lambda^- G^\C \, \cdot \, \Lambda ^+ G^\C
\eeq
 is open and dense  in $\left[ \Lambda G^\C \right]_e$, 
where 
$\Lambda^\pm G^\C$ denotes the subgroup of loops which are boundary values for holomorphic
maps $\D^\pm \to G^\C$,
 $\D^+$ denotes the unit disc, $\D^-$ the complement of its closure in the 
Riemann sphere, and $[X]_e$ denotes the identity component of
any group $X$. The set $\mathcal{B}$ is called the \emph{big cell}.
The \emph{Iwasawa decomposition} is
\beq \label{iwasawa}
\Lambda G^\C = \Omega G \, \cdot \, \Lambda^+ G^\C,
\eeq
 where $\Omega G \subset \Lambda G$
 is the subgroup of 
\emph{based} loops in $G$, that is loops which map 1 to the identity element.

A real form $\Lambda G^\C _{\hat \theta}$
of $\Lambda G^\C$ is defined to be the fixed point subgroup with respect to a
complex antilinear involution $\hat \theta$ of $\Lambda G^\C$. 
As part of a natural theory of affine Kac-Moody Lie groups,
finite order (complex linear or complex antilinear) automorphisms
 of loop groups have been studied and classified, based on the work of many
people - see \cite{bauschrousseau,  heintze, kacpeterson, levstein} and associated references.

After passing to an isomorphic loop group (see \cite{heintze}),
 one can assume that an antilinear involution $\hat \theta$ of $\Lambda G^\C$
has one of the following  forms:
\bdm
(\hat \theta x)(\lambda) := \theta (x(\varepsilon(\bar{\lambda})^{j})),
\hspace{1cm} \varepsilon,~j \in \{1,-1\},
\edm
where $\theta$ is an antilinear involution of $G^\C$,
 $\lambda$ is the loop parameter, and $x \in \Lambda G^\C$.
Note that, since $\lambda$ is an ${\mathbb S}^1$ parameter, one could
equivalently write $\lambda^{-1}$ instead of $\bar \lambda$, but our choice
of notation, which is commonly used, gives the right formula when one considers
holomorphic {extensions} of loops away from ${\mathbb S}^1$. For example, if $\theta$ is
just complex conjugation, and $x = \sum_i a_i \lambda^i$, and we take $\varepsilon=j=1$, then,
according to our choice,
$(\hat \theta x) (\lambda) = \sum_i \overline{a_i \bar{\lambda}^i} = \sum_i \overline{a_i} \lambda^i$.  Otherwise one 
 obtains $\sum_i \overline{a_i} \bar{\lambda}^{-i}$,
which is not holomorphic.

Complex linear involutions can be put into a similar
standard form, where $\theta$ is $\C$-linear, and $\lambda$ is not conjugated.
We consider involutions only in these standard forms. They have some redundancy
which need not concern us here.

Automorphisms come in two types, those of the \emph{first kind}, which, in 
the standard forms used in this article,
 preserve $\Lambda^+G^\C$,
and those of the \emph{second kind}, which take $\Lambda^+G^\C \to \Lambda^-G^\C$.
An involution is  of the first kind if $j=1$ and 
of the second kind if $j=-1$. 

A real form is said to be  \emph{almost split} if the involution is of the first
kind, and \emph{almost compact} if of the second kind.
The real form $\Lambda G$ is almost compact,  
as it is the fixed point subgroup of $\hat \theta$, where 
$(\hat \theta x)(\lambda) = \rho (x(\bar \lambda ^{-1}))$.
There is thus clearly no hope of obtaining an analogue of the
Birkhoff decomposition theorem for  $\Lambda G$, 
as elements of $\Lambda ^\pm G$
are just constant loops. On the other hand, the theorem does hold for
\emph{almost split} real forms
 - in fact, more generally, it is not difficult to show (see \cite{branderdorf}) that,
for any finite order automorphism  of the first kind,  $\hat \theta$
 (given in a standard form analogous to that above), the set  
$ \Lambda^- G^\C_{\hat \theta} \, \cdot \, \Lambda^+ G^\C_{\hat \theta}$
 is open and dense in $\left[ \Lambda G^\C_{\hat \theta} \right]_e$. 

\subsection{Results}
In this note we prove that 
this open dense set is actually 
the whole of $ \Lambda G^\C_{\hat \theta}$,
 for  almost split real forms when $G$ is compact:  suppose $\rhohat$ is an extension
of $\rho$ to an involution of the first kind of  $\Lambda G^\C$, given in
standard form, i.e. by one of the formulae:
\beq \label{standardform}
(\hat{\rho} x)(\lambda) := \rho (x( \varepsilon \bar \lambda)),  \hspace{1cm} \varepsilon = \pm 1.
\eeq
Set
\bdm
\hh :=  \Lambda G^\C_{\rhohat}, \hspace{1.5cm} 
\hh^{\pm} := \hh \cap \Lambda^\pm G^\C, \hspace{1cm} \hh^0 := \hh \cap G^\C = G.
\edm
Let $\Lambda^-_* G^\C$ and $\Lambda^+_* G^\C$ denote the subgroups of loops in 
$\Lambda^\pm G^\C$ which respectively
map the elements $\infty$ and $0$ to the identity element, and define 
$\hh^\pm_* := \hh \cap \Lambda^\pm_* G^\C$.
\begin{theorem} \label{thm1}
 There exists a decomposition
\bdm
\hh = \hh^-_* \, \cdot \, \hh^+ .
\edm
The map $\hh^-_*  \times  \hh^+ \to \hh$, given by $(x,y) \mapsto xy$, is a real analytic diffeomorphism. The subgroup $G= \hh^0$ is a deformation retract of $\hh$. In particular, 
$\hh$ is connected.
\end{theorem}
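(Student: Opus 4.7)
The plan is to separate the theorem into four assertions — uniqueness of the factorization, its local diffeomorphism property, globality, and the retraction — and to address them in that order. The compactness of $G$ is used only in the globality step, which I expect to be the main obstacle.

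For uniqueness I would check that $\hh^-_* \cap \hh^+ = \{e\}$: any loop in both subgroups extends holomorphically to the whole Riemann sphere, so is constant by Liouville, and the normalization at $\infty$ makes the constant trivial. Since $\hat\rho$ is of the first kind it preserves both $\Lambda^+ G^\C$ and $\Lambda^-_* G^\C$, so the complex Birkhoff splitting $\Lambda \mathfrak{g}^\C = \Lambda^-_* \mathfrak{g}^\C \oplus \Lambda^+ \mathfrak{g}^\C$ restricts to a splitting of $\Lambda \mathfrak{g}^\C_{\hat\rho}$, making the differential of the multiplication map an isomorphism at the identity, and by translation everywhere on $\hh^-_* \times \hh^+$. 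Combined with the injectivity from uniqueness, multiplication is a real analytic diffeomorphism onto its image; the same $\hat\rho$-invariance argument applied to the complex Birkhoff factors identifies this image with $\hh \cap \mathcal{B}$, which is open and dense in $[\hh]_e$ by the result from \cite{branderdorf} recalled in the introduction.

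The heart of the theorem is the assertion $\hh \subset \mathcal{B}$, and this is where compactness of $G$ must enter. The strategy I would pursue is to invoke the global Iwasawa decomposition $\Lambda G^\C = \Omega G \cdot \Lambda^+ G^\C$, which for compact $G$ is a diffeomorphism onto the identity component. Since $\hat\rho$ is of the first kind it preserves both factors, so uniqueness of the Iwasawa factors implies that any $x \in \hh$ splits as $x = g \cdot p$ with $g \in \Omega G \cap \hh$ and $p \in \hh^+$. The problem thereby reduces to showing that every based loop $g$ in the compact group $G$ satisfying $g(\lambda) = \rho(g(\varepsilon \bar\lambda))$ lies in $\mathcal{B}$. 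The non-big Birkhoff strata are indexed by nontrivial cocharacters $\mathbb{S}^1 \to T^\C$, and the interplay of the reality condition with the values lying in the compact $G$ should rule out every such stratum — this is the delicate point. A clean realization is likely via the Grassmannian model of $\Omega G$: the codimension conditions defining lower Schubert strata should be incompatible with the real structure induced by $\hat\rho$ when $G$ is compact.

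Once globality gives the diffeomorphism $\hh \cong \hh^-_* \times \hh^+$, a strong deformation retract of $\hh$ onto $G$ is obtained by $H(x, t) := x^-_*(\,\cdot\, / t) \cdot x_+( t \,\cdot\, )$ for $t \in (0, 1]$, extended continuously by $H(x, 0) := x_+(0) \in G$. Each $H(\cdot, t)$ remains in $\hh$ because $t$ is real, so the rescaling commutes with the complex conjugation of $\lambda$ in the reality condition; at the endpoints, $H(\cdot, 1) = \mathrm{id}_\hh$ while $H(\cdot, 0)$ maps into $G$ and is the identity on $G$. Connectedness of $\hh$ then follows from connectedness of $G$.
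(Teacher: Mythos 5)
Your uniqueness, local-diffeomorphism, and deformation-retract steps are sound and essentially coincide with the paper's (the rescaling $\lambda \mapsto t\lambda$ for real $t \in [0,1]$ is exactly the paper's Lemma \ref{topologylemma}). The problem is the step you yourself call ``the heart of the theorem,'' namely $\hh \subset \mathcal{B}$: you reduce it, via the Iwasawa splitting, to the claim that no $\hat\rho$-compatible based loop in $G$ lies in a lower Birkhoff stratum, and then assert that the Grassmannian model ``should'' rule this out. That is precisely where all the mathematical content sits, and no purely formal compatibility argument between the stratification and the real structure can succeed: for a \emph{non-compact} real form of the first kind the conclusion is false (Remark \ref{noncompactremark} of the paper exhibits $\mathrm{diag}(\lambda,\lambda^{-1})$, fixed by $(\hat\rho x)(\lambda)=\overline{x(\bar\lambda)}$ for $SL(2,\R)$, lying outside the big cell), so compactness must enter quantitatively. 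The paper's Proposition \ref{prop1} supplies exactly this: for $G=U(n)$ one writes a hypothetical $x = x_-\,\mathrm{diag}(\lambda^{k_1},\dots,\lambda^{k_n})\,x_+$ with some $k_i\neq 0$, imposes $x=(x^*)^{-1}$, and observes that a corner entry of $x_-^* x_-$ must vanish identically; at real $\lambda$ that entry is $\sum_i |a_i|^2$ for a row $(a_1,\dots,a_n)$ of $x_-^*$, forcing a row of an invertible matrix to vanish --- a contradiction that visibly uses positive-definiteness coming from unitarity. The general compact case then follows from a Peter--Weyl embedding $G\hookrightarrow U(n)$. Until you supply an argument of this strength, your proof has a genuine gap at its central point.

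A secondary, fixable flaw in your reduction: when $\varepsilon=-1$ the involution $\hat\rho$ does \emph{not} preserve $\Omega G$, since $(\hat\rho g)(1)=g(-1)$ need not be the identity; the Iwasawa factors of a $\hat\rho$-fixed loop are therefore only $\hat\rho$-fixed up to right multiplication by a constant in $G$, and your appeal to uniqueness of the Iwasawa factorization must be adjusted accordingly. Note also that the paper does not route through the Iwasawa decomposition at all for this theorem; it works directly with the Birkhoff strata, which avoids this complication.
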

 We derive Theorem \ref{thm1} directly,
  from the Birkhoff factorization of Pressley/Segal \cite{pressleysegal}, for the
case $G=U(n)$, and
then use the fact that any compact Lie group can be embedded in some $U(n)$.
The result does not hold if $G$ is not compact - see Remark \ref{noncompactremark} below.
There is, moreover, a generalization, Theorem \ref{thm1a} to certain subgroups of
$\hh$,  which is useful for applications, as described in 
Section \ref{iwasawaapp} below.

Let $\hat \tau: \Lambda G^\C \to \Lambda G^\C$ be an arbitrary (either $\C$-linear
or $\C$-antilinear)
involution of the \emph{second} kind
which restricts to an involution of $\hh$. Let $\hh_{\hat \tau}$ denote the fixed point
subgroup.
As a corollary of Theorem \ref{thm1} we obtain the following 
 global Iwasawa-type decomposition:
\begin{theorem} \label{thm2}
Every element $x \in \hh$ can be decomposed
\beq   \label{iwasawadecomp}
x = z_\tau y_+, \hspace{1cm} z_\tau \in \hh_{\hat \tau}, \hspace{.5cm}
  y_+ \in \hh^+.
\eeq
The element $z_\tau$ is unique up to right multiplication by an element of
 $\hh_{\hat \tau}^0 := \hh_{\hat \tau} \cap G^\C$,
and the projection $\hh \to \frac{\hh_{\hat \tau}}{\hh_{\hat \tau}^0}$,
given by $x \mapsto [z_\tau]$, the equivalence class of $z_{\tau}$ modulo this right multiplication,
is real analytic.
\end{theorem}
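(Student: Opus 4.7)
The plan is to encode the Iwasawa-type decomposition in a single ``anti-$\hat\tau$-invariant'' element of $\hh$ and then apply Theorem \ref{thm1} to it. Given $x \in \hh$, set $g := \hat\tau(x)^{-1} x$. Since $\hat\tau$ is an involutive automorphism, $\hat\tau(g) = g^{-1}$. A short rearrangement of $\hat\tau(x y_+^{-1}) = x y_+^{-1}$ shows that an element $y_+ \in \hh^+$ makes $z_\tau := x y_+^{-1}$ fixed by $\hat\tau$ if and only if $\hat\tau(y_+)^{-1} y_+ = g$, so the whole problem reduces to solving this last equation for $y_+$.

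By Theorem \ref{thm1} we factor $g = g_- g_+$ uniquely with $g_- \in \hh^-_*$ and $g_+ \in \hh^+$. Applying $\hat\tau$, which swaps $\hh^+$ with $\hh^-$ (second kind), and inverting, gives a second factorisation $g = \hat\tau(g_+)^{-1} \hat\tau(g_-)^{-1}$, whose first factor is in $\hh^-$ and whose second is in $\hh^+_*$ (because $g_-(\infty) = e$). Inserting the constant $\tau(a) \in G$ with $a := g_+(0)$ to normalise the first factor into $\hh^-_*$, uniqueness in Theorem \ref{thm1} forces
\bdm
\hat\tau(g_+)^{-1} \,=\, g_-\,\tau(a)^{-1}, \qquad \tau(a) \,=\, a^{-1},
\edm
the second identity coming from evaluation of $g_+$ at $\lambda = 0$. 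I then set $y_+ := b^{-1} g_+$, where $b \in G$ satisfies $b^2 = a$ and $\tau(b) = b^{-1}$: a direct computation using the display above yields $\hat\tau(y_+)^{-1} y_+ = g$, so $z_\tau := x y_+^{-1} \in \hh_{\hat\tau}$.

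The main obstacle is the existence of such a $b$, and this is where the compactness of $G$ enters crucially. The restriction $\tau|_G$ presents $G$ as a compact symmetric pair with Cartan decomposition $G = K \exp(\mathfrak{p})$, where $K = G^\tau = \hh_{\hat\tau}^0$ and $\mathfrak{p}$ is the $(-1)$-eigenspace of $d\tau$. Because $\hh$ is connected by Theorem \ref{thm1} and $a(e) = e$, the continuous map $x \mapsto a(x)$ takes its values in the identity component $\exp(\mathfrak{p})$ of $\{p \in G : \tau(p) = p^{-1}\}$; hence $a = \exp(X)$ for some $X \in \mathfrak{p}$, and $b := \exp(X/2)$ does the job, depending locally real-analytically on $a$.

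For uniqueness, if $x = z_\tau y_+ = z_\tau' y_+'$, then $h := (z_\tau')^{-1} z_\tau = y_+' y_+^{-1}$ lies in $\hh^+$; applying $\hat\tau$ and using the $\hat\tau$-invariance of $z_\tau, z_\tau'$ shows $h = \hat\tau(h) \in \hh^-$, so $h \in \hh^+ \cap \hh^- = \hh \cap G^\C = G$, and the $\hat\tau$-invariance places $h \in \hh_{\hat\tau}^0$. Real analyticity of the projection $x \mapsto [z_\tau]$ follows by composing the real-analytic steps $x \mapsto g$, the Birkhoff decomposition $g \mapsto (g_-, g_+)$ of Theorem \ref{thm1}, the evaluation $g_+ \mapsto a$, the local real-analytic branch $a \mapsto b$, and $(x, g_+, b) \mapsto z_\tau$; the uniqueness argument shows that the resulting coset $[z_\tau] \in \hh_{\hat\tau} / \hh_{\hat\tau}^0$ is independent of the chosen branch of $b$.
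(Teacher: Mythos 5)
Your argument is sound in outline and genuinely different from the paper's. The paper does not reprove the Iwasawa-type factorization at all: it quotes Theorem \ref{thm4} (Theorem 3 of \cite{branderdorf}), which already yields the decomposition for every $x$ such that $x^{-1}\hat\tau x$ lies in the identity component of $\kk\cap\mathcal{RB}$ provided $\kk^0$ is connected and compact, and then simply observes that Theorem \ref{thm1} and Remark \ref{remark1} make the right big cell all of $\hh$ and make $\hh$ connected, so the hypothesis holds for every $x$. You instead reprove the content of that cited theorem from scratch for $\kk=\hh$: reduce to solving $\hat\tau(y_+)^{-1}y_+=g$ with $g=\hat\tau(x)^{-1}x$, compare the two Birkhoff factorizations of $g$ obtained by applying $\hat\tau$ and inverting, and absorb the constant discrepancy $a=g_+(0)$ by a square root in $\exp(\mathfrak{p})$. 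Compactness of $G$ enters in your version through the surjectivity of $\exp$ on $\mathfrak{p}$ and the identification of $\exp(\mathfrak{p})$ with the identity component of $\{p\in G:\tau(p)=p^{-1}\}$, mirroring the compactness hypothesis on $\kk^0$ in the cited theorem. Your existence and uniqueness steps check out, including the facts that $a=g_+(0)\in G$ (loops in $\hh^+$ are $G$-valued at $\lambda=0$) and that $\hh^+\cap\hh^-=G$.

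The one genuine flaw is the assertion that $b=\exp(X/2)$ ``depends locally real-analytically on $a$''. That is false at points where $\exp|_{\mathfrak{p}}$ is singular: for $G=SU(2)$ and $\tau=\Ad_{\mathrm{diag}(1,-1)}$, the square roots in $\exp(\mathfrak{p})\cong{\mathbb S}^2$ of a point $a$ near the antipode $-I$ converge to limits depending on the direction of approach, so no continuous, let alone real-analytic, local branch $a\mapsto b$ exists there. The conclusion you need survives for a different reason: since $b\,\tau(b)^{-1}=b^2=a$, the coset $bK$ is the preimage of $a$ under the Cartan map $G/K\to\exp(\mathfrak{p})$, $gK\mapsto g\tau(g)^{-1}$, which is a bijective totally geodesic immersion of a compact manifold and hence a real-analytic diffeomorphism onto $\exp(\mathfrak{p})$; thus $a\mapsto bK$ (not $a\mapsto b$) is real analytic, and composing with a local real-analytic section of $\hh_{\hat\tau}\to\hh_{\hat\tau}/\hh_{\hat\tau}^0$ gives the analyticity of $x\mapsto[z_\tau]$. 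With that substitution your proof is complete.
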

There is also a generalization of Theorem \ref{thm2} to subgroups, given by
Theorem \ref{thm2a}.

\subsection{Applications} \label{applications}
\subsubsection{Applications of Theorem \ref{thm1}}
The Birkhoff decomposition in almost split real forms is used
in  important techniques for producing solutions to integrable systems:
one such method is the
\emph{dressing technique}, originating in the work 
of Zakharov and Shabat \cite{zs}.
The dressing technique can easily be shown to apply in general
to a large class of integrable systems, discussed in \cite{branderdorf}.
In the present context,  this is an (at least local) action by the group $\Lambda^- G^\C_{\rhohat}$
on the space of solutions, where $G$ is not necessarily compact but $\rhohat$ is
of the first kind.
 Special cases of dressing are the classical B\"acklund
and Ribaucour transformations. 

Terng and Uhlenbeck \cite{ternguhlenbeck1998} studied the dressing action
on a class of rapidly decaying solutions for certain integrable systems
  associated to the group $G= U(n)$.  They showed that the  subgroup of \emph{rational}
 loops in $\Lambda^- G^\C_{\rhohat}$ acts globally on this class of solutions.
Dressing by rational loops is also treated more generally in almost split real forms
by Donaldson, Fox and Goertsches \cite{dfg}. 

The  decomposition Theorem \ref{thm1} strengthens the dressing result of Terng/Uhlenbeck
by showing  that the dressing 
action of  the whole subgroup $ \Lambda^- G^\C_{\rhohat}$
is well defined globally, and that this holds for any compact $G$.

There are many examples to which this applies: for example,
any of the integrable systems
described in \cite{terng2006}
which satisfy the ``$U$ reality condition" -
which is the case $\varepsilon = 1$ in (\ref{standardform}) - if $U$ is compact.
  A large geometrical
class are
\emph{curved flats} in symmetric spaces, defined by Ferus and Pedit \cite{feruspedit1996II}.
Curved flats  themselves contain, as special cases, many problems in geometry, such as
isometric immersions of space forms \cite{feruspedit1996II}, and 
 \emph{isothermic surfaces} \cite{bhjpp, cgs}, examples of which are surfaces of revolution, quadrics
  and constant mean curvature surfaces: a good
survey can be found in Burstall \cite{burstall}. 

Another important class of maps to which dressing has been 
applied successfully are \emph{harmonic maps} from  Riemann surfaces into 
compact symmetric spaces.  For Riemannian harmonic maps, the loop group is $\Lambda G$,
and dressing  is done via the standard Iwasawa splitting of Pressley and Segal. This 
allowed various interesting results to be proved 
\cite{bergveltguest, burstallpedit1995, guestohnita,  uhlenbeck1989}.
On the other hand, \emph{Lorentzian} harmonic maps (i.e. maps which are harmonic
with respect to a Lorentzian metric, also called \emph{$1+1$ wave maps}),
 from surfaces into  symmetric spaces,
 have a loop group formulation \cite{terng1997}
into an almost split real form, and dressing can be done via a Birkhoff decomposition.
Dressing of these maps by certain rational loops (simple elements)  is studied in \cite{ternguhlenbeck2004}.
The significance of Theorem \ref{thm1} is, again, that the dressing action by the whole of
$\Lambda^- G^\C_{\hat \rho}$  is global, for
compact targets, just as in the Riemannian case.

A second technique which uses the Birkhoff decomposition in an almost split real form 
is the method of Dorfmeister,
Pedit and Wu (DPW) \cite{dorfmeisterpeditwu}, when applied to Lorentzian 
harmonic maps from a surface into a symmetric space.  This technique, while
depending on a loop group factorization, differs from the dressing technique
in that it gives a way to produce \emph{all} solutions to the problem at hand
from simple data. The DPW method was shown to apply in this
way to Lorentzian harmonic maps by M Toda  \cite{ toda2005}.
Its application to pseudospherical surfaces is based on the fact that
the Gauss map of a pseudospherical surface is  harmonic with respect
to the Lorentzian metric given by its second fundamental form.
The DPW technique allows one to  construct all solutions from arbitrary pairs of curves. 
A similar result was also obtained for the sine-Gordon equation (solutions of
which correspond to pseudospherical surfaces)  by Krichever \cite{krichever}. 
 Since the group here is compact,
Theorem \ref{thm1} shows that the construction is global.  It does not (and cannot)
allow one to produce \emph{complete} pseudospherical surfaces, however, as Hilbert
proved \cite{hilbert1} that none exist. 

\subsubsection{Applications of Theorem \ref{thm2}}
The Iwasawa type decomposition Theorem \ref{thm2} is relevant to 
both the dressing action and 
the generalized DPW method, 
given in \cite{branderdorf}, for the case of isometric 
immersions of space forms (as formulated by Ferus and Pedit \cite{feruspedit1996}),
as well as to constant curvature Lagrangian submanifolds of $\C P^n$ and $\C {\mathbb H}^n$,
and, more generally, any solutions of the 3-involution loop group system studied
in \cite{reflective}.  For isometric immersions of space forms, an interesting
point here is that, even when the isometry group  is \emph{non}-compact, it turns out
that in some cases the relevant loop group also satisfies a reality condition of
the first kind for some \emph{compact} Lie group, which means that
 all solutions can be constructed globally from curved flats, and
that the dressing action is global.  This example is discussed in more
detail in Section  \ref{iwasawaapp} below.

\section{Proof of Theorem \ref{thm1}}
We will later be concerned with 
 automorphisms of the first kind of order $n$, 
which, for $\C$-linear and $\C$-antilinear cases respectively, we assume are in a standard form:
\beq \label{standardform2}
\begin{split}
& (\hat \theta x)(\lambda) := \theta (x( e^{\frac{2 k \pi i}{n}}\lambda)),
\hspace{1cm} k \in {\mathbb Z}, ~\hspace{.5cm} \theta ~ \textup{$\C$-linear},\\
& (\hat \theta x)(\lambda) := \theta (x( \varepsilon \bar \lambda)),
\hspace{1cm} \varepsilon =\pm 1, ~\hspace{.5cm} \theta ~ \textup{$\C$-antilinear},
\end{split}
\eeq
where $\theta$ is a finite order automorphism of $G^\C$. The antilinear case occurs
only for even values of $n$.

We first discuss the topology of some subgroups of $\Lambda^\pm G^\C$.
For any subgroup $\mathcal{K}$, define 
$\mathcal{K}^\pm = \mathcal{K} \cap \Lambda^\pm G^\C$ and 
$\mathcal{K}^\pm_* = \mathcal{K} \cap \Lambda^\pm _*G^\C$.

\begin{lemma} \label{topologylemma}
Let $\hat \theta_j$, for $j=1,...,k$, be a collection of mutually commuting 
finite order  automorphisms of the first kind of $\Lambda G^\C$, each given
as an extension of a $\C$-linear or $\C$-antilinear automorphism, $\theta_j$ of $G^\C$,
 in the form (\ref{standardform2}).  Let 
\bdm
\mathcal{K} := \Lambda G^\C_{\hat \theta_1...\hat\theta_k},
\edm
be the subgroup of elements which are fixed by all $k$ automorphisms. 
Then: 
\begin{enumerate}
\item
The subgroups $\mathcal{K}^\pm_*$ are contractible.
\item 
The constant subgroup $\mathcal{K}^0 = G^\C_{\theta_1 ... \theta_k}$
is a deformation retract of each of
 $\mathcal{K}^\pm$.
\end{enumerate}
\end{lemma}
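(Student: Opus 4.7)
The plan is to construct explicit deformation retractions by scaling the loop parameter. For $t\in[0,1]$, $y\in\Lambda^+G^\C$ and $\lambda\in\bar\D^+$, set $(\phi_ty)(\lambda):=y(t\lambda)$; this is defined because $y$ extends holomorphically to $\bar\D^+$, and $\phi_ty$ clearly lies in $\Lambda^+G^\C$. Dually, for $x\in\Lambda^-G^\C$ and $\lambda\in\bar\D^-$, set $(\psi_tx)(\lambda):=x(\lambda/t)$ for $t\in(0,1]$ and $\psi_0x:=x(\infty)$, the constant loop at infinity; this is well defined since $|\lambda/t|\geq 1$ lies in the domain $\bar\D^-$ of holomorphy of $x$. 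At $t=1$ both maps are the identity, while at $t=0$ they collapse $y$ and $x$ to the constants $y(0)$ and $x(\infty)$ in $G^\C=\mathcal{K}^0$; if further $y(0)=e$ (so $y\in\mathcal{K}^+_*$) then $\phi_0y=e$, and similarly $\psi_0x=e$ on $\mathcal{K}^-_*$.

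The key step is verifying that $\phi_t$ and $\psi_t$ preserve $\mathcal{K}$, for which it suffices to show they commute with each $\hat\theta_j$ separately. In the $\C$-linear case $(\hat\theta_jy)(\lambda)=\theta_j(y(e^{2k\pi i/n}\lambda))$, the scaling by $t$ and the rotation by the root of unity commute on $\C$, so the check is immediate. In the $\C$-antilinear case $(\hat\theta_jy)(\lambda)=\theta_j(y(\varepsilon\bar\lambda))$, the essential point is that $t$ is \emph{real}, whence $t\bar\lambda=\overline{t\lambda}$, and
\bdm
(\hat\theta_j\phi_ty)(\lambda)=\theta_j(y(t\varepsilon\bar\lambda))=\theta_j(y(\varepsilon\overline{t\lambda}))=y(t\lambda)=(\phi_ty)(\lambda),
\edm
using that $y$ is $\hat\theta_j$-fixed. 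The analogous computation works for $\psi_t$, so the scalings restrict to maps $\phi:[0,1]\times\mathcal{K}^+\to\mathcal{K}^+$ and $\psi:[0,1]\times\mathcal{K}^-\to\mathcal{K}^-$, each fixing the constant-loop subgroup $\mathcal{K}^0$ pointwise.

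Continuity in the ambient $H^s$ loop topology reduces to a standard dilation-continuity statement for Hardy-type spaces of matrix-valued loops: for $y=\sum_{n\geq 0}a_n\lambda^n$, the scaling acts by $a_n\mapsto t^na_n$, and $(t,y)\mapsto\{t^na_n\}$ is continuous in the weighted norm defining $H^s$; the analogous statement in the negative Fourier modes handles $\psi$. The resulting continuous maps $\phi$ and $\psi$ are the required deformation retractions of $\mathcal{K}^\pm$ onto $\mathcal{K}^0$, and restrict to contractions of $\mathcal{K}^\pm_*$ to the identity element, establishing both (1) and (2).

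The only genuinely delicate point in the argument is ensuring that the scaling trick survives the antilinear twists, resolved by insisting that the homotopy parameter $t$ be real; the continuity of dilation in $H^s$ is a standard fact, and no other obstacle arises.
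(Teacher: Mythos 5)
Your proof is correct and follows essentially the same route as the paper's: a dilation homotopy $\lambda \mapsto t\lambda$ (resp.\ $\lambda/t$), with the realness of $t$ handling the antilinear twists and continuity of dilation in the $H^s$ topology giving continuity of the retraction. The one step you use implicitly --- that the fixed-point identity $\theta_j(y(\varepsilon\bar\mu)) = y(\mu)$, known a priori only for $\mu \in {\mathbb S}^1$, may be evaluated at the interior point $\mu = t\lambda$ --- is exactly the holomorphic-extension observation the paper spells out (both sides are holomorphic on the disc and agree on the boundary), and is immediate.
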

\begin{proof}
We give the details for $\kk^+$, the argument being the same for the other
cases.
 Let $x$ be an 
element of $\kk^+$.  By definition, $x$ has a holomorphic extension to a map
$\hat{x}: \D^+ \to G^\C$.
Now for $\lambda \in {\mathbb S}^1$, $\hat x$ is fixed by each of $\hat \theta_j$,
that is, if $\hat \theta_j$ is $\C$-linear,
\bdm
\hat x(\lambda) \, [\theta_j(\hat x( e^{\frac{2 k \pi i}{n}}\lambda))]^{-1} -I =0,
\edm
and an analogous expression if antilinear.
Either   of these 
expressions are holomorphic
in $\lambda$ on $\D^+$, and therefore equivalent to zero on $\D^+$. 
Now consider, for $t$ in the interval $[0,1] \subset \real$,
the family of loops 
$\gamma_t: {\mathbb S}^1 \to \Lambda G^\C$
defined by 
\bdm
\gamma_t(\lambda) := \hat x(t\lambda).
\edm
This family depends continuously on $t$ because $\hat x$ is holomorphic on 
$\D^+$.
Clearly $\gamma_t \in \Lambda^+ G^\C$ for any $t \in [0,1]$. Moreover,
 we just showed that,
for the two respective cases, we have 
\bdm
\hat x(z) = \theta_j (\hat x( e^{\frac{2 k \pi i}{n}}  z )), \hspace{1.5cm}
\hat x( z) = \theta_j (\hat x( \varepsilon \bar z  )),
\edm
for any 
value $z \in \D^+$. In particular, for $\lambda \in {\mathbb S}^1$,
and real $t \in [0,1]$, we have, for the linear case:
\bdm
\gamma_t(\lambda) := \hat x(t \lambda) = 
     \theta_j (\hat x(t  e^{\frac{2 k \pi i}{n}} \lambda   ))
=: (\hat \theta_j \gamma_t) (\lambda),
\edm
and, similarly, $\gamma_t(\lambda) = (\hat \theta_j \gamma_t) (\lambda)$
for the antilinear case. 
In other words, $\gamma_t$ is fixed by all of the involutions, so
  $\gamma_t \in \kk^+$ for all $t \in [0,1]$.
  
   Finally, $\gamma_1 = x$,
$\gamma_0$ is just the constant loop  $\hat x(0) \in \kk^0$, and if 
$x \in \kk^0$ then $\gamma_t = x$ for all $t$. Hence $\Gamma: \, [0,1] \times \hh^+ \to \hh^+$,
given by $(t, x) \mapsto \gamma_t$ is a deformation retract of $\hh^+$ onto $\hh^0$.
\end{proof}

The proof of Theorem \ref{thm1} will be derived from the
Birkhoff theorem of Pressley and Segal, which states:
\begin{theorem} \label{birkhoffthm}(Birkhoff factorization theorem) \cite{pressleysegal}.
Every element $x \in \Lambda GL(n,\C)$ has a decomposition:
\beq \label{psfact} 
 x = x_- D  x_+, \hspace{1.5cm} D = \textup{diag}(\lambda^{k_1},..., \lambda^{k_n}),
\eeq
where $k_i$ are integers, $k_1 \geq ... \geq k_n$,
and $x_{\pm} \in \Lambda^\pm GL(n,\C)$.
  The set $\mathcal{B}$ 
(the big cell),  on which 
$D=I$, is open and dense in the identity component $\left[ \Lambda GL(n,\C) \right]_e$.
The multiplication map $\Lambda_*^- GL(n,\C) \times \Lambda^+ GL(n,\C) \to \mathcal{B}$
is a real analytic diffeomorphism.

The same statement holds replacing $GL(n,\C)$ with  the complexification $G^\C$
of any connected compact semisimple Lie group $G$, and replacing the middle
term $D$ with a homomorphism 
  from ${\mathbb S}^1$ into the complexification of
  a maximal torus of $G$.
\end{theorem}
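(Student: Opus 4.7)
The plan is to realise $\Lambda GL(n,\C)$ as acting by multiplication on the Hilbert space $H=L^2({\mathbb S}^1,\C^n)$ with its canonical polarisation $H=H_+\oplus H_-$ given by non-negative versus negative Fourier modes, and to deduce the factorisation from orbit analysis on the resulting restricted Grassmannian $\textup{Gr}(H)$. The map $x\mapsto xH_+$ sends $\Lambda GL(n,\C)$ into $\textup{Gr}(H)$, with stabiliser exactly $\Lambda^+ GL(n,\C)$, so the Birkhoff problem becomes a description of the orbits of the $\Lambda^- GL(n,\C)$-action on this Grassmannian.

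For existence, I would analyse the Fredholm projection $\textup{pr}_+|_W \colon W\to H_+$ where $W=xH_+$. Its kernel and cokernel are finite-dimensional and its index equals the winding number of $\det x$. Iteratively extracting extremal vectors of the form $\lambda^{k_i}e_i$ from $W\cap H_-$ (equivalently, performing a Gaussian-elimination-type reduction on the symbol of $x$) produces the integers $k_1\geq\cdots\geq k_n$ and explicit factors realising $x=x_- D x_+$ with $D=\textup{diag}(\lambda^{k_1},\dots,\lambda^{k_n})$. On the big cell $\mathcal{B}$ (where $D=I$) uniqueness is automatic: any common element of $\Lambda^-_* GL(n,\C)$ and $\Lambda^+ GL(n,\C)$ would extend to a holomorphic map $\chat \to GL(n,\C)$ equal to $I$ at $\infty$, hence be constantly $I$ by Liouville's theorem.

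Openness of $\mathcal{B}$ is automatic, since invertibility of $\textup{pr}_+|_{xH_+}$ is an open condition among Fredholm operators. Density follows from the fact that the complement of $\mathcal{B}$ in $[\Lambda GL(n,\C)]_e$ is a countable union of Schubert-type strata indexed by the non-trivial multi-indices $(k_1,\dots,k_n)$, each of positive finite codimension. The analytic diffeomorphism statement for the product map $\Lambda^-_* GL(n,\C)\times \Lambda^+ GL(n,\C) \to \mathcal{B}$ then follows from the inverse function theorem applied at the identity, whose derivative is the topological splitting $\Lambda\mathfrak{gl}_n(\C)=\Lambda^-_*\mathfrak{gl}_n(\C)\oplus\Lambda^+\mathfrak{gl}_n(\C)$ evident from the Fourier decomposition of $H^s$-loops. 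The global analytic inverse is then given by the formulae $x_+ = (\textup{pr}_+|_{xH_+})^{-1} \circ \textup{pr}_+ \circ x$ and $x_- = x x_+^{-1}$, which depend analytically on $x$ because the projection does.

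For a general connected compact semisimple $G$, I would embed $G\hookrightarrow U(n)$ so that $G^\C\hookrightarrow GL(n,\C)$, and inherit a $GL(n,\C)$-factorisation $x = x_- D x_+$ for any $x\in\Lambda G^\C$. The main obstacle, and the most delicate point, is to show that the factors can be chosen so that $x_\pm \in \Lambda^\pm G^\C$ and $D$ is a homomorphism ${\mathbb S}^1 \to T^\C$, where $T\subset G$ is a fixed maximal torus. For this I would invoke the Kac--Moody/Bruhat structure of $\Lambda G^\C$: the orbits of $\Lambda^- G^\C \times \Lambda^+ G^\C$ on $\Lambda G^\C$ are indexed by the affine Weyl group, and a middle representative in each orbit corresponds to an element of the coroot lattice, i.e.\ a one-parameter subgroup into $T^\C$. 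Combining this algebraic orbit description with the $GL(n,\C)$-uniqueness modulo the $\Lambda^+ GL(n,\C)$-stabiliser forces $x_\pm \in \Lambda^\pm G^\C$, after which the density and diffeomorphism conclusions transfer \emph{verbatim} from the $GL(n,\C)$ case.
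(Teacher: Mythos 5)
The paper does not actually prove this statement: Theorem \ref{birkhoffthm} is imported verbatim from Pressley--Segal \cite{pressleysegal} as a black box, and the paper's own work (Proposition \ref{prop1}, Theorem \ref{thm1}) begins downstream of it. Your proposal is therefore a reconstruction of the Pressley--Segal argument, and in outline it follows their route faithfully: the multiplication action on $H=L^2({\mathbb S}^1,\C^n)$, the identification of $\Lambda^+ GL(n,\C)$ as the stabiliser of $H_+$, the Fredholm analysis of $\textup{pr}_+|_{xH_+}$, the stratification of the Grassmannian for openness and density, and Liouville's theorem for uniqueness on the big cell. One concrete step is wrong, however: your explicit inverse formula is a tautology. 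For $v\in H_+$ the vector $xv$ lies in $W=xH_+$, so by injectivity of $\textup{pr}_+|_W$ on the big cell, $(\textup{pr}_+|_{xH_+})^{-1}\circ \textup{pr}_+\circ x = x|_{H_+}$; the formula returns $x$, not $x_+$. The projection in fact recovers the \emph{negative} factor first: since $x_-\in\Lambda^-_* GL(n,\C)$ is $I$ plus strictly negative Fourier modes, $\textup{pr}_+(x_- e_i)=e_i$ for each constant basis vector $e_i$, and $x_- e_i\in W$, so the columns of $x_-$ are $(\textup{pr}_+|_W)^{-1}(e_i)$; one then sets $x_+:=x_-^{-1}x$, and these formulae do give the claimed analytic dependence on $x$.

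The more serious gap is the general compact semisimple case, which you rightly call the delicate point but do not close. Knowing $x=x_-Dx_+$ with $x_\pm\in\Lambda^\pm GL(n,\C)$ gives no control on whether the factors can be chosen in $\Lambda^\pm G^\C$: the $GL$-factors are unique only up to the $GL$-stabiliser, which is much larger than its intersection with $\Lambda G^\C$, so the $GL_n$ uniqueness cannot by itself ``force'' the factors into the subgroup. Moreover, the fact you invoke to repair this --- that $\Lambda G^\C$ is a union of $\Lambda^-G^\C\cdot\Lambda^+G^\C$ double cosets indexed by the affine Weyl group, with representatives given by homomorphisms ${\mathbb S}^1\to T^\C$ --- is precisely the algebraic content of the theorem being proved, so as written the argument is circular. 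Pressley--Segal treat general $G$ intrinsically, through the Grassmannian/Bruhat structure attached to $\Lambda G^\C$ itself, not by transfer from $GL_n$. Note the contrast with the paper: Proposition \ref{prop1} does use an embedding $G\subset U(n)$, but only to prove the far weaker statement that the real form $\hh$ lies inside the big cell, taking the general Birkhoff theorem as given --- that is the setting in which the embedding trick genuinely suffices.
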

Now let  $G$ and $\rho$ be as in the introduction,  suppose that
$\rhohat$ is given by (\ref{standardform}), and  
$\hh := \Lambda G^\C_{\rhohat}$.
For the subgroup $\hh$, define  the \emph{big cell} to be the set
\bdm
\mathcal{B}_{\rhohat} := \mathcal{B} \cap \hh.
\edm
  We first recall some
properties of $\hh$, which are proved in \cite{branderdorf}, but can 
be verified by the reader without much difficulty:
\begin{lemma} \cite{branderdorf}. \label{branderdorflemma}
\begin{enumerate}
\item The group $\hh$ is a closed Banach Lie subgroup of $\Lambda G^\C$.
\item If $x=x_- x_+$ is the Birkhoff splitting of an element
 $x \in \mathcal{B}_{\rhohat} \subset \Lambda G^\C$,
with $x_- \in \Lambda^-_* G^\C$ and $x_+ \in \Lambda^+ G^\C$, then 
both $x_-$ and $x_+$ are elements of $\hh$.
\item Let $x$ be an element of $\hh$, and let $D(x)$ denote the largest domain in
$\C \cup \{ \infty\}$ to which $x$ extends analytically. Then $x$ is $G$-valued for values of
$\lambda \in \real \cap D(x)$ if  $\varepsilon = 1$, and for $\lambda \in i\real \cap D(x)$
 if $\varepsilon = -1$.
\end{enumerate}
\end{lemma}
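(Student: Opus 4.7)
The lemma bundles three statements of rather different flavor, so I would address them separately.

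For part (1), the plan is to invoke the standard fact that the fixed-point set of a real analytic group involution on a Banach Lie group is a closed Banach Lie subgroup. The map $\rhohat$ is continuous and, by inspection of its formula, real analytic on $\Lambda G^\C$, so $\hh = \mathrm{Fix}(\rhohat)$ is closed. At $e$ its Lie algebra is the $+1$ eigenspace of the continuous involution $d\rhohat_e$; the involution splits $T_e\Lambda G^\C$ into closed $\pm 1$ eigenspaces, and the loop-group exponential map intertwines $d\rhohat_e$ with $\rhohat$, so it carries the $+1$ eigenspace diffeomorphically onto a neighborhood of $e$ in $\hh$. Left translations supply charts at every other point.

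For part (2), the engine is uniqueness of Birkhoff factorization on the big cell. Since $\rhohat$ is of the first kind, I would first check $\rhohat(\Lambda^\pm G^\C) \subset \Lambda^\pm G^\C$: the map $\lambda \mapsto \varepsilon\bar\lambda$ preserves each of $\DD$ and $\DC$ antiholomorphically, and composition with the antilinear $\rho$ restores $\lambda$-holomorphicity (a one-line check on a Laurent series). Moreover $\rhohat(x_-)(\infty) = \rho(x_-(\infty)) = I$, so the normalization at $\infty$ is preserved. Applying $\rhohat$ to $x = x_- x_+$ and using $\rhohat(x) = x$ then yields a second factorization $x = \rhohat(x_-)\,\rhohat(x_+)$ in the same normalized form; the uniqueness clause of Theorem~\ref{birkhoffthm} forces $\rhohat(x_\pm) = x_\pm$, i.e.\ $x_\pm \in \hh$.

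For part (3), the tool is analytic continuation. The identity $x(\lambda) = \rho(x(\varepsilon\bar\lambda))$ holds on $\SSS^1$ by definition of $\hh$, and both sides are holomorphic in $\lambda$ on $D(x)$ (the right side because $\bar\lambda$ composed with the antilinear $\rho$ again yields a holomorphic function of $\lambda$). Since they agree on the accumulation set $\SSS^1$, the identity theorem extends the equality across those connected components of $D(x)$ that meet $\SSS^1$. Restricting to the fixed locus of $\lambda \mapsto \varepsilon\bar\lambda$, namely $\real$ for $\varepsilon=1$ and $i\real$ for $\varepsilon=-1$, reduces the identity to $x(\lambda) = \rho(x(\lambda))$, which is exactly the condition $x(\lambda) \in G^\C_\rho = G$.

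The main obstacle is part (1): verifying not just that $\hh$ is a closed subgroup but that it is a submanifold. This reduces to the continuous involution $d\rhohat_e$ admitting a closed complement to its $+1$ eigenspace, which is automatic in the Banach setting for any order-two involution. Parts (2) and (3) are then essentially formal, needing only careful unpacking of the ``first kind'' formula together with uniqueness of Birkhoff factorization, respectively the identity theorem.
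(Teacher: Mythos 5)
Your three arguments are correct and are the standard ones: the paper itself offers no proof of this lemma, stating only that the properties are proved in \cite{branderdorf} and ``can be verified by the reader without much difficulty,'' so there is no internal proof to compare against. Your use of the eigenspace splitting of $d\rhohat_e$ for (1), preservation of $\Lambda^\pm G^\C$ and the normalization at $\infty$ plus uniqueness of the Birkhoff factorization for (2), and the identity theorem applied on the fixed locus of $\lambda \mapsto \varepsilon\bar\lambda$ for (3) is exactly the intended level of verification.
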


The proof of Theorem \ref{thm1} will follow from the next proposition:
\begin{proposition} \label{prop1}
$\mathcal{B}_{\rhohat} = \hh$.
\end{proposition}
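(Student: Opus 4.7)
The plan is to reduce the proposition to the $GL(n, \mathbb{C})$ Birkhoff factorization via an embedding $G \hookrightarrow U(n)$, chosen so that $\rho$ on $G^{\mathbb{C}}$ is the restriction of $A \mapsto (A^*)^{-1}$. Given $x \in \hh$, I would apply Theorem \ref{birkhoffthm} to write $x = x_- D x_+$ in $\Lambda GL(n, \mathbb{C})$ with $D = \text{diag}(\lambda^{k_1}, \ldots, \lambda^{k_n})$ and $k_1 \geq \cdots \geq k_n$. The goal will be to prove $D = I$; then, by uniqueness of the Birkhoff stratification applied to both this and the $G^{\mathbb{C}}$-version of Theorem \ref{birkhoffthm}, the factors $x_\pm$ automatically lie in $\Lambda^\pm G^{\mathbb{C}}$, placing $x$ in $\mathcal{B}$ and hence in $\mathcal{B}_{\hat\rho}$.

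To force $D = I$, I exploit the pointwise identity $x(\lambda)\, x(\varepsilon \bar\lambda)^* = I$ encoding $\hat\rho(x) = x$. For a matrix-valued loop $f(\lambda) = \sum_k a_k \lambda^k$ set $f^\#(\lambda) := f(\varepsilon \bar\lambda)^*$; a short calculation gives $f^\#(\lambda) = \sum_k \varepsilon^k a_k^* \lambda^k$, which shows that $(\cdot)^\#$ preserves both $\Lambda^+ GL(n, \mathbb{C})$ and $\Lambda^-_* GL(n, \mathbb{C})$, and that $D^\# = E\, D$ with $E := \text{diag}(\varepsilon^{k_i})$. Substituting $x = x_- D x_+$ into $x \cdot x^\# = I$ and using the order-reversing rule $(fg)^\# = g^\# f^\#$ leads to
\[
D\, A\, E\, D \;=\; B, \qquad A := x_+\, x_+^\# \in \Lambda^+, \quad B := x_-^{-1}(x_-^{-1})^\# \in \Lambda^-_*.
\]
Comparing $(i,j)$-entries yields $\varepsilon^{k_j}\, \lambda^{k_i + k_j}\, A_{ij}(\lambda) = B_{ij}(\lambda)$. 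Because $A_{ij}$ has only non-negative powers of $\lambda$ while $B_{ij}$ has only non-positive ones, the inequality $k_i + k_j \geq 1$ forces $A_{ij} \equiv 0$.

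Specializing to $i = j = 1$: if $k_1 \geq 1$ then $A_{11}(0) = 0$. But $A(0) = x_+(0)\, x_+(0)^*$ is positive definite (since $x_+(0) \in GL(n, \mathbb{C})$), giving $A_{11}(0) > 0$, a contradiction. Hence $k_1 \leq 0$. Applying the same argument to $x^{-1} \in \hh$ (automatically $\hat\rho$-fixed since $\hat\rho$ is a group homomorphism), whose Birkhoff type is the non-increasing reordering of $\{-k_1, \ldots, -k_n\}$, namely $(-k_n, \ldots, -k_1)$, yields $-k_n \leq 0$, i.e., $k_n \geq 0$. Since $k_1 \geq \cdots \geq k_n$, every $k_i$ must vanish, so $D = I$.

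The main obstacle will be the bookkeeping around the $^\#$ operation --- verifying its preservation of the relevant $\Lambda^\pm$ subgroups, computing $D^\#$ with the $\varepsilon^{k_i}$ sign factors in the $\varepsilon = -1$ case, and carefully tracking the order reversal when applying $^\#$ to products to get the displayed equation. A secondary technical point is the transfer of triviality of $D$ from the $GL(n, \mathbb{C})$-Birkhoff to the $G^{\mathbb{C}}$-Birkhoff, which follows from the uniqueness part of Theorem \ref{birkhoffthm} applied on both sides.
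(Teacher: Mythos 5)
Your overall strategy is the same as the paper's: embed $G$ in $U(n)$, Birkhoff--factor $x = x_- D x_+$ in $\Lambda GL(n,\C)$, and use the pointwise relation $x(\lambda)\,x(\varepsilon\bar\lambda)^* = I$ entrywise to force the exponents $k_i$ to vanish, with positive definiteness of $x_+(0)x_+(0)^*$ (resp.\ of the analogous product built from $x_-$) supplying the contradiction. Your first half is correct: the identity $D\,A\,E\,D = B$ with $A = x_+x_+^\# \in \Lambda^+$, $B = x_-^{-1}(x_-^{-1})^\# \in \Lambda^-$, together with $A_{11}(0)>0$, does give $k_1 \le 0$, and this is essentially the paper's argument for the case $k_1>0$. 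The transfer back from $GL(n,\C)$ to $G^\C$ is also fine. The gap is in how you obtain $k_n \ge 0$: you apply the first half to $x^{-1}$ and assert that the Birkhoff type of $x^{-1}$ is the non-increasing reordering of $\{-k_1,\dots,-k_n\}$. That assertion is false. Inversion carries the stratum $\Lambda^- D\,\Lambda^+$ onto $\Lambda^+ D^{-1}\Lambda^-$, which is a stratum of the \emph{opposite} Birkhoff decomposition, and the two stratifications do not match up stratum by stratum. Concretely, take
\bdm
x = \begin{pmatrix} \lambda & 0 \\ 1 & \lambda^{-1}\end{pmatrix}
  = \begin{pmatrix} 1 & 0 \\ \lambda^{-1} & 1\end{pmatrix}
    \begin{pmatrix} \lambda & 0 \\ 0 & \lambda^{-1}\end{pmatrix},
\qquad
x^{-1} = \begin{pmatrix} \lambda^{-1} & 0 \\ -1 & \lambda\end{pmatrix}
  = \begin{pmatrix} 1 & -\lambda^{-1} \\ 0 & 1\end{pmatrix}
    \begin{pmatrix} 0 & 1 \\ -1 & \lambda\end{pmatrix}.
\edm
Here $x$ has type $(1,-1)$, whose negated reordering is again $(1,-1)$, yet $x^{-1}$ lies in the big cell (the second factor is in $\Lambda^+GL(2,\C)$, having constant determinant and polynomial inverse). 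So the type of $x^{-1}$ cannot be read off from that of $x$, and your second half does not go through as written.

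The repair is cheap and is exactly what the paper does: use the \emph{other} rearrangement of the same identity. From $x_- D\, x_+x_+^\#\, D^\#\, x_-^\# = I$ one also gets
\bdm
(x_-^\# x_-)\, D \;=\; (D^\#)^{-1}\,(x_+^\#)^{-1}x_+^{-1},
\edm
whose $(n,n)$ entry reads $(x_-^\# x_-)_{nn}\,\lambda^{k_n} = \varepsilon^{-k_n}\lambda^{-k_n}\bigl((x_+^\#)^{-1}x_+^{-1}\bigr)_{nn}$. The left side involves only powers $\le k_n$ and the right side only powers $\ge -k_n$, so if $k_n\le -1$ both sides vanish identically; but $(x_-^\# x_-)_{nn}$ is strictly positive at $\lambda=\infty$ (and at $\lambda=\pm1$, where $x_-^\#=\overline{x_-}^{\,t}$, which is how the paper phrases it), a contradiction. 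This yields $k_n\ge 0$ directly, with no claim about the Birkhoff type of $x^{-1}$, and combined with your $k_1\le 0$ gives $D=I$.
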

\begin{proof}
First consider the case $G= U(n)$.
Set $G^\C := GL(n,\C)$. The real form $U(n)$ is given by the involution 
$\rho x := (\bar{x}^t)^{-1}$. We describe the case $\varepsilon =1$ in 
(\ref{standardform}), the proof for  $\varepsilon =-1$ being essentially identical.
Extending $\rho$ to an involution of the loop 
group by the formula 
\beq \label{su2inv}
(\rhohat x)(\lambda) := \rho (x(\bar \lambda)) = (\overline{x(\bar \lambda)}^t)^{-1} =: (x^*)^{-1},
\eeq
it is simple to check that loops in $\Lambda G^\C_{\rhohat}$ take values in $U(n)$
when $\lambda$ is real. According to the Birkhoff factorization theorem of 
\cite{pressleysegal}, every element of $\left[ \Lambda G^\C \right]_e$ which is \emph{not} in the
big cell can be expressed in the form (\ref{psfact}),
where   at least one $k_i$ is nonzero, and $x_{\pm} \in \Lambda^\pm G^\C$.
We need to prove that such an element cannot be in the real form $\Lambda G^\C_{\rhohat}$,
in other words cannot be fixed by $\rhohat$. It then follows that every element in
$\hh$ is in fact in the big cell. 
The condition $x= \rhohat x$ is 
\bdm
x_-  \, \textup{diag} (\lambda^{k_1},..., \lambda^{k_n})\,  x_+ = (x_-^*)^{-1} 
  \textup{diag} \, (\lambda^{-k_1},..., \lambda^{-k_n}) \, (x_+^*)^{-1}.
\edm
Either $k_1 >0$ or $k_n <0$ (or both). If $k_n<0$ rearrange the equation to:
\beq \label{useeqn1}
x_-^* x_- \, \textup{diag} (\lambda^{k_1},..., \lambda^{k_n})\, = 
  \textup{diag} \, (\lambda^{-k_1},..., \lambda^{-k_n}) \, (x_+^*)^{-1} x_+^{-1}. 
\eeq
But the components of the matrix $x_-^* x_-$  are 
 power series in $\lambda^{-1}$, while those of the matrix $(x_+^*)^{-1} x_+^{-1}$ 
 are power series in $\lambda$. Since $k_n <0$, it follows that the $(n,n)$ component of
(\ref{useeqn1}) is identically zero.  
Writing the last row of $x_-^*$ as $(a_1,...,a_n)$  we thus obtain, from the $(n,n)$ component on the
left hand side, the identity
\bdm
a_1^* a_1 + ... + a_n^* a_n = 0,
\edm
where, for a scalar function $y(\lambda)$, we use
 $(y^*)(\lambda) := \overline{y(\bar \lambda)}$.
Since, for real values of $\lambda$, we have $a_i^*= \overline{a_i}$,
this implies that $a_1= a_2 = .... = a_n = 0$, at $\lambda =1$,
which is impossible, as $x_-$ is an invertible matrix for all $\lambda$ in 
the unit circle.

If $k_1>0$, we instead write
\bdm
 \textup{diag} (\lambda^{k_1},..., \lambda^{k_n})\,  x_+ x_+^* =
 x_-^{-1} (x_-^*)^{-1}  \,  \textup{diag} \, (\lambda^{-k_1},..., \lambda^{-k_n}), 
\edm
and similarly deduce that a row (this time the first) of $x_-^{-1}$ is zero.  

The general case now follows from what we have just shown:
by the Peter-Weyl theorem, the compact connected semisimple Lie group $G$ can be embedded as a 
closed subgroup of the unitary group $U(n)$ for some $n$. 
While there are many possible complexifications of $G$, they are all
isomorphic. Given the embedding in $U(n)$, and the complexification 
$U(n)^\C = GL(n,\C)$, there is a 
 natural complexification $G^\C$  as follows:
  if $\mathfrak{g}$ is the Lie algebra, set $\mathfrak{g}^\C := \mathfrak{g}+ i \mathfrak{g}$
  in $\mathfrak{gl}(n,\C)$, and
  $G^\C := \exp( \mathfrak{g}^\C)$
in $GL(n,\C)$.  It follows from the corresponding properties on the Lie algebras that
 the involution of $G^\C$ which determines $G$ is just the 
restriction to $G^\C$ of that which determines $U(n)$, and we denote both
by $\rho$. Thus $G= G^\C_\rho = G^\C \cap U(n)$.

Now $\Lambda G^\C$ is a subgroup of $\Lambda GL(n,\C)$, and,
 by assumption, $\rho$ is extended to $\Lambda G^\C$ by the formula
(\ref{standardform}). Hence this extension is also the restriction to
$\Lambda G^\C$ of the involution $\rhohat$ discusses in the $U(n)$ case.
We denote both extensions by $\rhohat$.
Thus $\Lambda G^\C_{\rhohat} =\Lambda G^\C \cap \Lambda GL(n,\C)_{\rhohat}$.

Theorem \ref{birkhoffthm} 
says that any element $x \in \Lambda G^\C$ which is not in the big cell has
  a factorization of the form (\ref{psfact}), where now $D$ is a non-trivial homomorphism 
  from ${\mathbb S}^1$ into the complexification of
  a maximal torus of $G$.
 Since any  torus of
   $G$ is contained in a maximal
  torus of $U(n)$, it follows that $D$ is a non-trivial homomorphism into
  the complexification of a maximal torus of $U(n)$.  
  Since $\Lambda G^\C_{\rhohat} \subset \Lambda GL(n,\C)_{\rhohat}$
  and
   $\Lambda ^\pm G^\C \subset \Lambda^\pm GL(n,\C)$,
   the $U(n)$ case implies that $x$ cannot be
  in $\hh$.  
\end{proof}
 \noindent \textbf{Proof of Theorem \ref{thm1}:} $~~$
It follows from the Birkhoff factorization Theorem \ref{birkhoffthm},
from Proposition \ref{prop1}, and from the second item of Lemma 
\ref{branderdorflemma}, together with the fact that $\hh_*^-$ and $\hh^+$
are closed Banach submanifolds, that the multiplication map $\hh_*^- \times \hh^+ \to \hh$
is a real analytic diffeomorphism.  

It remains to show that $G = \hh^0$ is a deformation retract of $\hh$.
Since $\hh$ is diffeomorphic to the product $\hh_*^- \times \hh^+$,
this  follows
from Lemma \ref{topologylemma}.
\qed \\

\begin{remark}  \label{remark1}
 There is obviously an analogue of Theorem \ref{thm1},
 switching $+$ and $-$. 
\end{remark}

\begin{remark}  \label{noncompactremark}
 The decomposition of Theorem \ref{thm1}  does not hold (even if one restricts to
 the identity component) if the real form $G$ is non-compact.
 For example, taking $G^\C = SL(2,\C)$ and $G= SL(2,\real)$, the loop
  {\small{$x = \bbar \lambda & 0 \\ 0 &\lambda^{-1} \ebar$}} is not in the big cell, but does
  take values in $SL(2,\real)$ for $\lambda \in \real$ - more specifically, it is an
  element of the identity component of $\Lambda SL(2,\C)_{\rhohat}$, 
  where $(\rhohat x)(\lambda):= \overline{x(\bar \lambda)}$ defines an involution of the
  first kind. 
  
  Neither is it true, if $G^\C_{\rhohat}$ is non-compact,
   that  $\hh^0$ has the same number of connected components as $\hh$.
    The connected
  components of $\Lambda GL(n,\C)$ correspond to the winding numbers of the
  determinants of the elements.  If one considers $G=GL(2,\real)$ then the loops
  {\small{$x = \bbar \lambda^k & 0 \\ 0 &1  \ebar$}}, which have winding number $k$,
  are all elements of $\Lambda GL(2,\C)_{\rhohat}$,  
  where again $(\rhohat x)(\lambda):= \overline{x(\bar \lambda)}$ is of the first 
  kind.  Thus $\hh = \Lambda GL(2,\C)_{\rhohat}$ has infinitely many components, while
  $\hh^0 = GL(2,\real)$ has only two.
\end{remark} 

\begin{remark} 
Note that Theorem \ref{thm1} implies that the winding number, given by the 
determinant of the matrix, for any loop in 
$\Lambda GL(n,\C)_{\rhohat}$, where $\rhohat$ is given by (\ref{su2inv}),
is zero.  This follows because the theorem implies that $\Lambda GL(n,\C)_{\rhohat}$
is contained in the identity component of $\Lambda GL(n,\C)$.  
\end{remark}

\subsection{Generalization of Theorem \ref{thm1}}

 Let $G$ and $\rhohat$ be as in previous sections.
Let $\hat \theta_i$, $i = 1,...,k$ be a collection of
mutually commuting finite order 
 automorphisms of the first kind (either linear or antilinear), given in the form (\ref{standardform2}),
and all of which commute with $\rhohat$. Set
\bdm
 \widehat{\hh} := \Lambda G^\C_{\rhohat \hat \theta_1 ... \hat \theta_k},
\edm
 the  subgroup
 of $\Lambda G^\C$ consisting of elements which are fixed by all of
 $\rhohat$ and $\hat \theta_i$.

 \begin{theorem} \label{thm1a}
 There exists a decomposition
\bdm
\widehat \hh = \widehat \hh^-_* \, \cdot \, \widehat \hh^+ .
\edm
The map $\widehat \hh^-_*  \times  \widehat \hh^+ \to \hh$, given by
 $(x,y) \mapsto xy$, is a real analytic diffeomorphism. 
 The constant subgroup 
$\widehat \hh^0 = G_{\hat \theta_1 ... \hat \theta_k}$ is a deformation
retract of $\widehat \hh$.
\end{theorem}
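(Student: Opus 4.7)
\smallskip

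My plan is to derive Theorem \ref{thm1a} as a direct corollary of Theorem \ref{thm1}, exploiting the uniqueness of the Birkhoff factorization together with the fact that each $\hat\theta_i$ is of the first kind. Since the $\hat\theta_i$ all commute with $\rhohat$, we have $\widehat{\hh} \subset \hh$, so any $x \in \widehat{\hh}$ already admits a factorization $x = x_- \cdot x_+$ with $x_- \in \hh^-_*$ and $x_+ \in \hh^+$, by Theorem \ref{thm1}. The whole proof reduces to showing that these two factors actually lie in $\widehat{\hh}$.

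The key step is the following. Fix any $\hat\theta_i$. Because $\hat\theta_i$ is of the first kind given in the standard form (\ref{standardform2}), it preserves $\Lambda^\pm G^\C$, and moreover it preserves the normalization at $\infty$: if $\hat\theta_i$ is $\C$-linear then $(\hat\theta_i x_-)(\infty) = \theta_i(x_-(\infty)) = \theta_i(I) = I$, and the antilinear case is identical since $\overline{\infty} = \infty$ in the Riemann sphere picture of $\D^-$. Applying $\hat\theta_i$ to $x = x_- x_+$ gives
\[
x = \hat\theta_i(x) = \hat\theta_i(x_-) \cdot \hat\theta_i(x_+),
\]
with $\hat\theta_i(x_-) \in \Lambda^-_* G^\C$ and $\hat\theta_i(x_+) \in \Lambda^+ G^\C$. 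But Theorem \ref{thm1} (via the Pressley--Segal big-cell uniqueness embedded in Theorem \ref{birkhoffthm}) says that such a factorization of $x$ is unique, hence $\hat\theta_i(x_\pm) = x_\pm$. Running over all $i$, we conclude $x_- \in \widehat{\hh}^-_*$ and $x_+ \in \widehat{\hh}^+$.

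It follows that the multiplication map $\widehat{\hh}^-_* \times \widehat{\hh}^+ \to \widehat{\hh}$ is a bijection, obtained by restricting the real analytic diffeomorphism from Theorem \ref{thm1}. Since $\widehat{\hh}^\pm_*$ are closed Banach Lie subgroups of $\hh^\pm_*$ (they are cut out by the closed conditions $\hat\theta_i(y) = y$), the restriction and its inverse are real analytic, giving the diffeomorphism claim. Finally, the collection $\{\rhohat, \hat\theta_1, \ldots, \hat\theta_k\}$ is a mutually commuting family of finite-order first-kind automorphisms in standard form, so Lemma \ref{topologylemma} applies directly to $\mathcal{K} = \widehat{\hh}$: the factor $\widehat{\hh}^-_*$ is contractible, $\widehat{\hh}^+$ deformation retracts onto $\widehat{\hh}^0 = G_{\hat\theta_1 \ldots \hat\theta_k}$, and the product of these retractions yields the asserted deformation retract of $\widehat{\hh}$ onto $\widehat{\hh}^0$.

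I do not expect a serious obstacle here, since every ingredient is already in place; the only point requiring care is checking that $\hat\theta_i$ preserves the normalization $x_-(\infty) = I$ in both the $\C$-linear and $\C$-antilinear cases, and that the closed-subgroup structure of $\widehat{\hh}^\pm$ is enough to transfer real analyticity of the inverse from the ambient splitting of Theorem \ref{thm1}.
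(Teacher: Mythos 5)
Your proof is correct and follows essentially the same route as the paper: reduce to Theorem \ref{thm1} via the inclusion $\widehat{\hh} \subset \hh$, show that the Birkhoff factors of an element fixed by the $\hat\theta_i$ are themselves fixed, and invoke Lemma \ref{topologylemma} for the deformation retract. The only difference is cosmetic --- the paper cites \cite{branderdorf} for the fact that the factors lie in $\widehat{\hh}$, whereas you prove it directly from the uniqueness of the big-cell factorization together with the check that first-kind automorphisms in standard form preserve $\Lambda^-_* G^\C$ and $\Lambda^+ G^\C$, which is exactly the intended argument.
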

\begin{proof}
Set $\widehat{\mathcal{B}} = \widehat{\hh} \cap \mathcal{B}$.
It is not difficult to show (see \cite{branderdorf}, Proposition 1)
 that the first two properties of Lemma
 \ref{branderdorflemma} also hold for $\widehat \hh$.  Since $\widehat{\hh}$ is
 contained in $\hh$, Proposition \ref{prop1} also applies to $\widehat{\mathcal{B}}$.
 Hence the proof again follows from these facts, together with the 
 Pressley/Segal Birkhoff factorization and the topology of 
 $\widehat \hh_*^-$ and $\widehat \hh^+$ given by Lemma \ref{topologylemma}.
\end{proof}
 
 Note that in some cases, such as when $\hat \theta$ is a $\C$-linear involution of the 
first kind which restricts to an inner automorphism of $G^\C$, one has an isomorphism
$\Psi: \Lambda G^\C \to \Lambda G^\C_{\hat \theta}$.
However, the Birkhoff splitting obtained from this isomorphism is not the same as
the one just given, because $\Psi (\Lambda^\pm G^C) \neq \Lambda^\pm G^\C_{\hat \theta}$. 
For practical applications, such as dressing and the DPW method,
 we are really interested in decompositions of the form
$\Lambda^- G^\C_{\hat \theta} \, \cdot \, \Lambda^+ G^\C_{\hat \theta}$, as described here.


\section{Proof of Theorem \ref{thm2}}
Generalizations of the standard Iwasawa decomposition of $\Lambda G^\C$, namely (\ref{iwasawa}), to subgroups $\kk$ of $\Lambda G^\C$ and arbitrary involutions
 $\hat \tau$  of the second kind, were considered in \cite{branderdorf}.
 That is, decompositions of the form:
\beq \label{generaliwasawa}
\kk = \kk_{\hat \tau} \cdot \kk^+,
\eeq
 where  $\kk_{\hat \tau}$ is the fixed point subgroup,
 $\kk^+ = \kk \cap \Lambda^+ G^\C$, and where
 the $\kk_{\hat \tau}$ factor is unique up to right multiplication by 
 the constant subgroup
 $\kk_{\hat\tau}^0 :=\kk_{\hat \tau} \cap G^\C$.
 In general such a decomposition exists on a neighbourhood of the identity, but not globally.
 For example, in the standard Iwasawa decomposition (\ref{iwasawa}), if the real form
 were non-compact, then the decomposition would not be global, even on the identity 
 component of $\Lambda G^\C$.
 
Theorem \ref{thm2} states that the decomposition is global for 
$\kk = \hh :=   \Lambda G^\C_{\rhohat} $.
Note that this is not just a special case of the standard Iwasawa decomposition
(\ref{iwasawa}). Taking the case $\kk = \left[ \Lambda G^\C \right]_e$,
and
$\hat \tau$ is an antilinear involution of the second kind
for a \emph{non}-compact real form, then, as was already mentioned, the
decomposition (\ref{generaliwasawa}) is not global. See for example, the case of 
$\Lambda SU(1,1)$, studied in \cite{brs}.
But if one now restricts this to 
$\Lambda G^\C_{\rhohat} \subset \kk$, assuming that $\rhohat$
commutes with $\hat \tau$,
then the theorem holds.

We need a result from \cite{branderdorf}. Note that a Birkhoff decomposable
subgroup $\kk$ of $\Lambda G^\C$ means a connected subgroup which
has the property that $\left([\kk^-]_e \cdot [\kk^+]_e \right) \cap ([\kk^+]_e \cdot [\kk^-]_e)$
 is open and dense in $[\kk]_e$, and  this property is satisfied by our group $\hh$.
 Define the \emph{right big cell} of $\Lambda G^\C$
in analogue with the (left) big cell $\mathcal{B}$, interchanging $+$ and $-$, i.e.
$\mathcal{RB} := \Lambda^+ G^\C \, \cdot \, \Lambda ^- G^\C$. 

\begin{theorem} \label{thm4} (From Theorem 3 in \cite{branderdorf}.) 
Suppose $\kk$ is any Birkhoff decomposable subgroup of $\Lambda G^\C$,
and $\hat \tau$ is any involution of the second kind of $\Lambda G^\C$,
which restricts to an involution of $\kk$.
Set 
\bdm
\mathcal{U} := \{x \in \kk ~|~ \exists ~z_\tau \in \kk_{\hat \tau},~ y_+ \in \kk^+, ~
  \textup{s.t.} ~ x = z_\tau y_+ \}.
\edm
\begin{enumerate}
\item
If the constant subgroup $\kk^0$ 
is connected and compact, then every element $x \in \kk$ which has
the property that $x^{-1} \hat \tau x$ is in the 
identity component of $\kk \cap \mathcal{RB}$ is also an element of $\mathcal{U}$.
\item If $x = z_\tau y_+ \in \mathcal{U}$, then the factor $z_\tau$ is unique
up to right multiplication by an element of $\kk^0$. 
\item The map $\mathcal{U} \to \frac{\kk_{\hat \tau}}{\kk^0}$ given by
$x = z_\tau y_+ \mapsto [z_\tau]$ is real analytic.
\end{enumerate}
\end{theorem}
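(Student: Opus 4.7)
The plan is to reduce the existence assertion in (1) to a right Birkhoff factorization followed by a Cartan square-root construction inside $\kk^0$. The initial observation is that writing $x = z_\tau y_+$ with $\hat{\tau}(z_\tau) = z_\tau$ and $y_+ \in \kk^+$ is equivalent, via $z_\tau = x y_+^{-1}$, to the single equation $g := x^{-1}\hat{\tau}(x) = y_+^{-1}\hat{\tau}(y_+)$. Because $\hat{\tau}$ is of the second kind, $\hat{\tau}(y_+) \in \kk^-$, so necessarily $g \in \kk^+\cdot\kk^- = \kk\cap\mathcal{RB}$, matching the hypothesis of (1); moreover $g$ automatically satisfies $\hat{\tau}(g) = g^{-1}$.

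First I would take the canonical right Birkhoff factorization $g = v_+ v_-$ with $v_+ \in \kk^+$, $v_- \in \kk^-_*$, available because $\kk$ is Birkhoff decomposable. Applying $\hat{\tau}$ and using $\hat{\tau}(g) = g^{-1}$ produces two left Birkhoff factorizations of $g^{-1}$, namely $\hat{\tau}(v_+)\hat{\tau}(v_-) = v_-^{-1}v_+^{-1}$, and uniqueness up to $\kk^0$ yields a (unique) $k \in \kk^0$ with $\hat{\tau}(v_+) = v_-^{-1}k$; applying $\hat{\tau}^2=\operatorname{id}$ forces $\hat{\tau}(k) = k^{-1}$. The idea is then to absorb $k$ by replacing $(v_+,v_-)$ by $(v_+ s,\, s^{-1}v_-)$ for a suitable $s \in \kk^0$; a direct computation shows the modified factorization satisfies $\hat{\tau}(v_+') = (v_-')^{-1}$ precisely when $\hat{\tau}(s) = k^{-1} s$. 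Granted such an $s$, setting $y_+ := (v_+')^{-1}$ and $z_\tau := x y_+^{-1}$ gives the desired decomposition, and the relation $x^{-1}\hat{\tau}(x) = y_+^{-1}\hat{\tau}(y_+)$ confirms that $z_\tau$ is $\hat{\tau}$-fixed.

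Producing $s$ is the main obstacle, and this is where the topological hypotheses enter. Since $\hat{\tau}$ preserves $\kk^0$, it induces an involution on the compact connected group $\kk^0$, splitting its Lie algebra as $\mathfrak{k}^0 = \mathfrak{h}\oplus\mathfrak{p}$ into $\pm 1$ eigenspaces. The Cartan decomposition of a compact group then identifies the identity component of $\{h \in \kk^0 : \hat{\tau}(h) = h^{-1}\}$ with $\exp(\mathfrak{p})$. Continuous dependence of $k$ on $g$ together with $k=e$ when $g=e$, combined with the hypothesis that $g$ lies in the identity component of $\kk\cap\mathcal{RB}$, then forces $k \in \exp(\mathfrak{p})$. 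Writing $k = \exp(Y)$ with $Y \in \mathfrak{p}$ and taking $s = \exp(Y/2)$ gives $\hat{\tau}(s) = \exp(-Y/2) = s^{-1}$ and $k^{-1} s = \exp(-Y/2) = s^{-1}$, so the required identity $\hat{\tau}(s) = k^{-1}s$ holds.

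Uniqueness in (2) is purely algebraic: two decompositions $x = z_\tau y_+ = z_\tau' y_+'$ give $z_\tau^{-1}z_\tau' = y_+(y_+')^{-1}$, a $\hat{\tau}$-fixed element of $\kk^+$; since $\hat{\tau}$ is of the second kind, any such element lies in $\kk^+ \cap \hat{\tau}(\kk^+) \subseteq \kk^+ \cap \kk^- = \kk^0$. For the real analyticity in (3), each ingredient of the construction --- the right Birkhoff splitting on its open domain, the extraction of $k$ from Birkhoff uniqueness, the Lie-algebra logarithm on a neighbourhood in the compact group $\kk^0$, the exponential, and the group operations --- is real analytic; passing to the quotient $\kk_{\hat{\tau}}/\kk^0$ absorbs the remaining non-canonical choices (the $\kk^0$-ambiguity in the Birkhoff normalization and the choice of square root), so the induced map $x \mapsto [z_\tau]$ is well defined and real analytic.
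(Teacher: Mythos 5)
First, a remark on the comparison itself: the paper does not prove Theorem \ref{thm4} at all --- it is quoted verbatim from Theorem 3 of \cite{branderdorf} --- so there is no in-paper argument to measure your proposal against. That said, your overall strategy is the natural (and almost certainly the intended) one, and most of it is sound: the equivalence of $x = z_\tau y_+$ with the single equation $g := x^{-1}\hat\tau(x) = y_+^{-1}\hat\tau(y_+)$, the extraction from the two Birkhoff factorizations of $g^{-1}$ of a constant $k = v_-\hat\tau(v_+)\in\kk^0$ satisfying $\hat\tau(k)=k^{-1}$, the observation that $x\in\mathcal{U}$ precisely when $k$ lies in the image of the Cartan map $s\mapsto s\hat\tau(s)^{-1}$ of $\kk^0$ (which, for $\kk^0$ compact and connected, equals $\exp(\mathfrak{p})$), and the arguments for parts (2) and (3) are all essentially correct.

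The gap is the single sentence asserting that continuity of $g\mapsto k$, together with $k=e$ at $g=e$ and the hypothesis $g\in[\kk\cap\mathcal{RB}]_e$, ``forces $k\in\exp(\mathfrak{p})$.'' A connectedness argument of this type requires a path from $g$ to $e$ along which $k$ is defined, continuous, \emph{and} valued in $M:=\{h\in\kk^0 : \hat\tau(h)=h^{-1}\}$; only then does $k(g)$ land in the identity component $M_e=\exp(\mathfrak{p})$. But $k$ takes values in $M$ only on the set $N:=\{g'\in\kk\cap\mathcal{RB} : \hat\tau(g')=(g')^{-1}\}$, and the hypothesis places $g$ in the identity component of $\kk\cap\mathcal{RB}$, not of $N$. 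Along a general path in $\kk\cap\mathcal{RB}$ the quantity $v_-'\hat\tau(v_+')$ is continuous and $\kk^0$-valued, but since $\kk^0$ is connected this yields no information; and one cannot conclude from ``$k\in M$ and $k$ is joined to $e$ inside $\kk^0$'' that $k\in M_e$, because $M$ can be disconnected in a compact connected group (for instance $\{h\in SO(3): h^2=e\}$ for the trivial involution has a component of rotations by $\pi$ far from the identity). In fact, using the scaling retraction of Lemma \ref{topologylemma} one sees that $k$ is essentially the constant term $\hat v_+(0)$ and that the components of $N\cap[\kk\cap\mathcal{RB}]_e$ are governed by those of $M$, so the assertion ``$k\in\exp(\mathfrak{p})$'' is exactly equivalent to the statement to be proved: it is the mathematical heart of part (1), and it must be established by some additional input --- e.g.\ exploiting the connectedness of $\kk$ and the special form $g=x^{-1}\hat\tau(x)$, or an open-and-closed argument on a set whose connectivity is actually controlled --- rather than by the continuity remark as written. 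Everything downstream of this step (the square root $s=\exp(Y/2)$, the absorption of $k$, the verification that $z_\tau$ is $\hat\tau$-fixed) is fine once the step is granted.
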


\noindent \textbf{Proof of Theorem \ref{thm2}:} In Theorem \ref{thm4}, for 
the case $\kk = \hh$, we want to show that $\mathcal{U} = \hh$.
By  Theorem \ref{thm1}, and Remark \ref{remark1}, the right big cell is the whole group, i.e. 
$\hh \cap \mathcal{RB} = \hh$. Note that $\hh^0 = G$, which is assumed connected.
Hence the conditions of Theorem 
\ref{thm4} for $x$ to be an element of $\mathcal{U}$ are met for any $x \in \hh$.
\qed \\

\subsection{Generalization of Theorem \ref{thm2}}
The following generalization has the identical proof of Theorem \ref{thm2}, using
Theorem \ref{thm1a}:
\begin{theorem} \label{thm2a}
Let 
 $\widehat{\hh} = \Lambda G^\C_{\rhohat \hat \theta_1 ... \hat \theta_k}$, as defined
 in Theorem \ref{thm1a}. Suppose that $\hat \tau$ is a $\C$-linear or $\C$-antilinear
  involution of
 the second kind which commutes with all of $\rhohat$ and $\hat \theta_i$.
 If the constant subgroup $\widehat{\hh}^0 = G_{\theta_1 ... \theta_k}$ is connected
 then Theorem \ref{thm2} is also valid for $\widehat{\hh}$.
\end{theorem}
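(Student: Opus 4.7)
The plan is to mimic the proof of Theorem \ref{thm2} line by line, feeding $\kk = \widehat{\hh}$ into Theorem \ref{thm4} and checking that the hypotheses are all supplied by Theorem \ref{thm1a} and its $(+/-)$-swapped analogue.

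First I would verify that $\widehat{\hh}$ satisfies the structural hypotheses of Theorem \ref{thm4}. The involution $\hat \tau$ restricts to $\widehat{\hh}$ because it commutes with each of $\rhohat, \hat \theta_1, \dots, \hat \theta_k$, hence preserves the common fixed-point set. Next, Theorem \ref{thm1a} gives the decomposition $\widehat{\hh} = \widehat{\hh}^-_* \cdot \widehat{\hh}^+$ as a real analytic diffeomorphism, and the deformation retract statement together with the assumed connectedness of $\widehat{\hh}^0 = G_{\theta_1\dots\theta_k}$ shows that $\widehat{\hh}$ is itself connected. Applying Theorem \ref{thm1a} with the roles of $+$ and $-$ interchanged (which is valid by the same argument, as in Remark \ref{remark1}) yields the analogous global factorization $\widehat{\hh} = \widehat{\hh}^+_* \cdot \widehat{\hh}^-$, so in particular $\widehat{\hh} \subset \mathcal{RB}$ and therefore $\widehat{\hh} \cap \mathcal{RB} = \widehat{\hh}$. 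Since $\widehat{\hh}$ is connected, this intersection coincides with its identity component. Finally, $\widehat{\hh}^0$ is a closed subgroup of the compact group $G$, hence compact, and it is connected by hypothesis; this supplies the remaining hypothesis of item (1) of Theorem \ref{thm4}.

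With these ingredients collected, Theorem \ref{thm4}(1) asserts that every $x \in \widehat{\hh}$ whose ``twisted'' element $x^{-1}\hat\tau x$ lies in the identity component of $\widehat{\hh} \cap \mathcal{RB}$ belongs to the Iwasawa set $\mathcal{U}$. But by the previous paragraph the condition on $x^{-1}\hat\tau x$ is automatic for every $x \in \widehat{\hh}$. This gives $\mathcal{U} = \widehat{\hh}$, which is exactly the existence of the decomposition $x = z_\tau y_+$ with $z_\tau \in \widehat{\hh}_{\hat\tau}$ and $y_+ \in \widehat{\hh}^+$. The uniqueness of $z_\tau$ modulo right multiplication by $\widehat{\hh}_{\hat\tau}^0 = \widehat{\hh}_{\hat\tau} \cap G^\C$ and the real analyticity of the quotient map $x \mapsto [z_\tau]$ are then handed to us by items (2) and (3) of Theorem \ref{thm4}.

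There is no real obstacle here; the only mildly subtle point is making sure the ``right big cell'' hypothesis and the connectedness of $\widehat{\hh}^0$ propagate correctly. The former reduces entirely to the observation that Theorem \ref{thm1a} applies symmetrically to $+$ and $-$, and the latter is handled by the assumption plus the deformation retract, which upgrades connectedness from $\widehat{\hh}^0$ to $\widehat{\hh}$ and hence legitimizes replacing ``identity component of $\widehat{\hh}\cap\mathcal{RB}$'' by $\widehat{\hh}$ itself. Thus the proof is, as the statement already anticipates, essentially identical to that of Theorem \ref{thm2} with Theorem \ref{thm1a} substituted for Theorem \ref{thm1}.
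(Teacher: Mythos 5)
Your proposal is correct and follows exactly the route the paper intends: the paper's entire proof of this theorem is the remark that it is ``the identical proof of Theorem \ref{thm2}, using Theorem \ref{thm1a}'', and you have simply spelled out that substitution, including the $(+/-)$-swapped version of Theorem \ref{thm1a} for the right big cell and the compactness and connectedness of $\widehat{\hh}^0$ needed for Theorem \ref{thm4}(1). No discrepancies with the paper's argument.
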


\section{An application of the theorems} \label{iwasawaapp}
As mentioned in Section \ref{applications}, Theorems \ref{thm1} and \ref{thm2} are relevant
to the application of the DPW method to isometric immersions of space forms (and 
generalizations). Isometric immersions of the hyperbolic space ${\mathbb H}^n$
into ${\mathbb E}^{2n-1}$ - equivalent to the case $k=n-1$ in the discussion below -
were shown to be an integrable system in work of Terng and Tenenblat in \cite{terng1980, terngtenenblat}, where B\"acklund transformations and soliton solutions are defined. 
The loop group formulation used here is due to Ferus and Pedit \cite{feruspedit1996},
where a generalized AKS theory was developed to produce ``finite type" solutions.

It was shown in \cite{branderdorf} that the loop group maps
for  isometric immersions with flat normal bundle 
of space forms locally correspond, in a one to one manner,
with curved flats
via the Birkhoff and generalized Iwasawa splittings. These have  been 
studied further in \cite{brander2}.

\subsection{Specific case}
 An interesting problem is the case of isometric 
immersions with flat normal bundle of the hyperbolic space ${\mathbb H}^n$ into 
either a hyperbolic space ${\mathbb H}_c^{n+k}$, where $-1<c$, or into a sphere
${\mathbb S}^{n+k}$.  Solutions to both of these problems are obtained from just
one map into a loop group, as follows (for details not proved here, see \cite{brander2}):
let $G^\C = SO(n+k+1,\C)$, where $k \geq n-1$,  and define the following three involutions, 
$\hat \sigma$, $\hat \rho$ and $\hat \tau$
on $\Lambda G^\C$ by the formulae:
\bdm
(\hat \sigma x)(\lambda) := \sigma (x(-\lambda)), \hspace{1cm}
(\hat \rho x) (\lambda) := \rho (x(-\bar \lambda)), \hspace{1cm}
(\hat \tau x) (\lambda) := \tau (x(\lambda^{-1})),
\edm
where $\rho$ is complex conjugation and 
\beqas
\sigma = \textup{Ad}_P, \hspace{2cm} P = \bbar I_n & 0 \\ 0 & -I_{k+1} \ebar,\\
\tau = \textup{Ad}_Q, \hspace{2cm} Q = \bbar I_{n+1} & 0 \\ 0 & -I_{k} \ebar,
\eeqas
and $I_r$ denotes an $r \times r$ identity matrix. 
Note that all three involutions commute. Set
\bdm
\hh = \Lambda G^\C_{\hat \rho \hat \sigma},
\edm
the fixed point subgroup 
with respect to $\hat \rho$ and $\hat \sigma$.  
Now $\hat \rho$ is an antilinear involution of the first kind, 
$G^\C_\rho = SO(n+k+1,\real) =:G$ is
compact, and $\hat \sigma$ is also an involution of the first kind. Thus we can apply 
 Theorem \ref{thm1a} to $\hh$. Moreover, 
$\hh^0 = G_\sigma = SO(n) \times SO(k+1)$, which is connected, so we can
also  apply Theorem \ref{thm2a} to $\hh$, with respect to the involution of 
the second kind $\hat \tau$.

 Since  all three involutions commute, 
$\hat \tau$ restricts to an involution of $\hh$. Let $\hh_{\hat \tau}$ denote the 
fixed point subgroup.

Consider the set of  $\mathcal{C}^\infty$ immersions  $f: \real^n \to \hh_{\hat \tau}/\hh_{\hat \tau}^0$,
such that, for any frame $F: \real^n \to \hh_{\hat \tau}$ for $f$, the Maurer-Cartan
form, $F^{-1}\dd F$, has a  Fourier expansion in $\lambda$ which is polynomial
of the form $\alpha_{-1} \lambda^{-1} + \alpha_0 + \alpha_1 \lambda$, 
where $\alpha_i$ are $\mathfrak{g}^\C$-valued 1-forms. This condition does not
depend on the choice of frame. Denote the set of such immersions by
\bdm
\mathcal{S} := \{ f: \real^n \to \frac{\hh_{\hat \tau}}{\hh_{\hat \tau}^0} ~|~ 
 f \textup{ regular}, ~F^{-1} \dd F = \sum_{i=-1}^1 \alpha_i \lambda^i, ~~\textup{for any frame }F \}.
\edm
If $F$ is a frame for $f \in \mathcal{S}$, then $F$ can be extended 
holomorphically in $\lambda$ to $\C^*$.
For a fixed pure imaginary value of $\lambda$, $\lambda_0 \in i \real$, the map
$F_{\lambda_0}: \real^n \to G^\C$ takes values in the real form $SO(n+k+1)$ 
(c.f. Lemma \ref{branderdorflemma}).  One can also verify  that
for a fixed value of $\lambda$ on the \emph{unit circle}, 
$\lambda_0 \in {\mathbb S}^1$, $F_{\lambda_0}$ takes values in a group 
isomorphic to the non-compact real form $SO(n+k,1)$. 

 Let $\tilde{f}_\lambda$ denote the $(n+1)$'st column of $F_\lambda$.
Multiplication on the right by 
$\hh_{\hat \tau}^0 = SO(n) \times \{I\} \times SO(k)$ leaves this column fixed,
so $\tilde{f}_\lambda$ is well defined by $f \in \mathcal{S}$.
Assume that $\tilde{f}_\lambda$ is an immersion (a generic condition in these dimensions).
Then:
\begin{enumerate}
\item
 For $\lambda_0 \in i \real$, the map
 $\tilde{f}_{\lambda_0}: \real^n \to {\mathbb S}^{n+k}$, with the induced 
metric, has flat normal bundle and 
constant curvature $c_{\lambda_0} \in (-\infty,0)$.
\item
 For 
$\lambda_0 \in {\mathbb S}^1$, the map $\tilde{f}_{\lambda_0}: \real^n \to {\mathbb H}^{n+k}$
has flat normal bundle and constant curvature $c_{\lambda_0} \in (-\infty,-1)$.
\end{enumerate} 

Conversely, any immersion of the two types just described is associated  to an element
of $\mathcal{S}$ whose $(n+1)$'st column is an immersion.

\subsubsection{The dressing action on $\mathcal{S}$}
We describe how to use the Iwasawa-type decomposition, Theorem \ref{thm2a},
to obtain a group action by $\hh^-$ on $\mathcal{S}$. Given  $g_- \in \hh^-$ and
a frame $F$ for $f \in \mathcal{S}$, define a new frame
 $\hat F$ by the (pointwise at $x \in \real^n$) Iwasawa
decomposition
\bdm
g_- \, F(x) = \hat F(x) \, g_+(x), \hspace{1.5cm} \hat F(x) \in \hh_{\hat \tau}, \hspace{1cm}
   g_+(x) \in \hh^+.
\edm 
The equivalence class $[\hat F(x)] \in \hh_{\hat \tau}/ \hh_{\hat \tau}^0$ is 
well defined and depends smoothly on $g_- F(x)$
, so we can choose $\hat F$ smoothly on the 
 domain of $F$.  Since $\hh_{\hat \tau}^0$ consists of constant loops, it is also
 clear that $[\hat F]$ is well defined, independent of the choice of frame $F$
 for $[F]$.
For a smooth choice of $\hat F$,  $g_+$ is also smooth, and we can compute the
Maurer-Cartan form of $\hat F$:
\bdm
\hat F^{-1} \dd \hat F = g_+^{-1} F^{-1} \dd F g_+ + g_+ ^{-1} \dd g_+.
\edm
We have $F^{-1} \dd F = \sum_{i=-1}^1 \alpha_i \lambda^i$ and we can expand
$g_+ = a_0 + a_1 \lambda + ....$. Hence
\bdm
\hat F^{-1} \dd \hat F = a_0^{-1} \alpha_{-1} a_0 \, \lambda^{-1} + ...
\edm
Since $\hat F$ is fixed by $\hat \tau$, which takes $\lambda \mapsto \lambda^{-1}$,
it follows that 
\bdm
\hat F^{-1} \dd \hat F = \hat\alpha_{-1} \lambda^{-1} + \hat\alpha_0
  + \hat\alpha_1 \lambda.
\edm
The condition that $[\hat F]$ is an immersion into
 $\hh_{\hat\tau}/\hh_{\hat\tau}^0$ can be read
off the 1-form $\hat \alpha_{-}$, and one can deduce that $[\hat F]$ is immersed if
$[F]$ is immersed, which is to say $[\hat F] \in \mathcal{S}$.  It is straightforward
to check that this defines a group action by $\hh^-$ on $\mathcal{S}$.

One cannot guarantee that the projection to the $(n+1)$'st column of the 
dressed solution will be  everywhere regular.  This is true with
respect to all the loop group methods for this particular problem (including
the generalized AKS theory of \cite{feruspedit1996}), and is
 not related to
the methods themselves, which globally produce immersions in the set $\mathcal{S}$.
Thus from the loop group point of view, the natural object to consider here is
$\mathcal{S}$, whose elements correspond to constant curvature submanifolds which
may be non-immersed at some points, but which lift to  immersions into either
$\frac{SO(n+k+1)}{SO(n) \times\{1\} \times SO(k)}$ or 
$\frac{SO(n+k,1)}{SO(n) \times \{1\} \times SO(k)}$ respectively for 
cases (1) and (2) above.

\subsubsection{The DPW method for $\mathcal{S}$}
The DPW method, described in \cite{branderdorf}, 
  uses the ``non-global" versions
of Theorems \ref{thm1a} and \ref{thm2a} to establish,
after fixing a basepoint in $\real^n$,
a (local) bijection between elements of $\mathcal{S}$ and regular curved flats in
$SO(n+k+1)/(SO(n) \times SO(k+1))$.
This bijection can now be seen to be global.

  These conclusions  are of interest because it is an open problem
whether or not \emph{complete} solutions exist for the immersions 
of types (1) or (2) above. This is 
equivalent to the problem of the existence of complete
isometric immersions with flat normal bundle 
of the hyperbolic space ${\mathbb H}^n$ into the Euclidean space ${\mathbb E}^{n+k}$,
a natural generalization of Hilbert's non-immersibility theorem of the hyperbolic
plane \cite{hilbert1}. 
In the higher dimensional generalization, non-immersibility has been proven for a 
\emph{non}-simply-connected hyperbolic space in a series of papers \cite{nikolayevsky, pedit, xavier}. See the
discussion in \cite{rrgh} for other possible generalizations to which these
comments also apply.
 We can conclude that any singularities arising
in the immersions constructed are not related to the failure of a loop group
decomposition.  This is in contrast to the case of constant mean curvature
surfaces in Minkowski 3-space, which are studied in \cite{brs}, where \emph{all}
singularities (and there are many), other than branch points, arise as a result of the lack of the
global property for the Iwasawa decomposition used.

\providecommand{\bysame}{\leavevmode\hbox to3em{\hrulefill}\thinspace}
\providecommand{\MR}{\relax\ifhmode\unskip\space\fi MR }
\providecommand{\MRhref}[2]{%
  \href{http://www.ams.org/mathscinet-getitem?mr=#1}{#2}
}
\providecommand{\href}[2]{#2}

\end{document}